\begin{document}
\makeatletter

\def\endofproofmark{$\Box$}

\def\RR{I\!\!R}
\def\NN{I\!\!N}

\def\endofproofmark{$\Box$}

\renewcommand{\thefootnote}{\fnsymbol{footnote}}

\title{Paired domination and 2- distance Paired domination of the flower graph $f_{n\times
m}$}
\author{
Tanveer Iqbal, Syed Ahtsham Ul Haq Bokhary\\
Centre for Advanced Studies in Pure and Applied Mathematics,\\
Bahauddin Zakariya University, Multan, Pakistan\\
E-mail: {\tt tanveeriqbal203@gmail.com, sihtsham@gmail.com}\\
}
\date{} \maketitle
\pagestyle{myheadings}
\newtheorem{theo}{Theorem}[section]
\newtheorem{prop}[theo]{Proposition}
\newtheorem{lemma}[theo]{Lemma}
\newtheorem{defn}{Definition}[section]
\newtheorem{cor}[theo]{Corollary}
\newtheorem{problem}{Problem}
\def\frameqed{\framebox(5.2,6.2){}}
\def\deshqed{\dashbox{2.71}(3.5,9.00){}}
\def\ruleqed{\rule{5.25\unitlength}{9.75\unitlength}}
\def\myqed{\rule{8.00\unitlength}{12.00\unitlength}}
\def\qed{\hbox{\hskip 6pt\vrule width 7pt height11pt depth1pt\hskip 3pt}
\bigskip}
\thispagestyle{empty} \null \addtolength{\textheight}{1cm}
\begin{abstract}
Let $G = (V, E)$ be a graph without an isolated vertex. A set
$D\subseteq V(G)$ is a $k$-distance paired domination set of $G$ if
$D$ is a $k$-distance dominating set of $G$ and the induced subgraph
$\langle D \rangle$ has a perfect matching. The minimum cardinality
of a $k$-distance paired dominating set for graph $G$ is the
$k$-distance paired domination number, denoted by $\gamma_{p}
^{k}(G)$. In this paper, the $k$-distance paired domination of the
flower graph $f_{n\times m}$ is discussed. For $m,n\geq 3$, the
exact values for paired domination number and $2$-distance paired
domination number of flower graph $f_{n\times m}$ are determined .
\end{abstract}
Keywords:{\em domination number, paired domination number, flower graph}\\
Mathematics Subject Classification: 05C15, 05C65
\section{Introduction}
All the graphs considerd  in this paper are finite and simple. Let $G = (V, E)$ be a graph without an isolated vertex.
  A set $D\subseteq V(G)$ is said to be a dominating set if every vertex in $V(G)-D$
is adjacent to at least one vertex in $D$. A paired dominating is a paired dominating set of G if it is dominating and the induced subgraph $\langle D
\rangle$ has a perfect matching. This type of domination was introduced by Haynes and Slater in $\cite{TP, WJ}$ and is well studied, for example $\cite{BM,
PSM, HLMD}$.
\par For two vertices $x$ and $y$, let $d(x,y)$ denote the distance
between $x$ and $y$ in $G$. A set $D \subseteq V(G)$ is a \emph{k}-distance dominating set of $G$ if every vertex in $V(G)-D$ is within distance $\emph{k}$
of at least one vertex in $D$. The \emph{k}-distance domination number $\gamma^{k}(G)$ of \emph{G} is the minimum cardinality among all \emph{k}-distance
dominating sets of $G$. The \emph{k}-distance paired-domination was introduced by Joanna Raczek $\cite{JR}$ as a generalization of paired-domination. For a
positive integer $k$, a set $D \subseteq V(G)$ is a \emph{k}-distance paired-dominating set if every vertex in $V(G)-D$ is within distance $k$ of a vertex
in $D$ and the induced subgraph $\langle D\rangle$ has a perfect matching. The \emph{k}-distance paired-domination number, denoted by $\gamma_{p}^{k}(G)$
is the minimum cardinality of a k-distance paired-dominating set. The distance paired domination number of different families of graph such as generalized
Peterson graphs, circulant graphs were studies in $\cite{HXYK, HXYGK}$.
\par In this paper, paired domination number and $2$-distance paired domination number of $f_{n\times m}$ are studied.
The exact values of the paired dominating number has been found for every value of $m$ and $n$.
 Throughout the paper, the subscripts are taken
modulo $n$ when it is unambiguous.
\section{Paired domination number of flower graph $f_{n\times m}$}
A graph \emph{G} is called an $f_{n \times m}$ flower graph if it
has $n$ vertices which form an $n$-cycle and $n$ sets of $m-2$
vertices which from $m$-cycles around the $n$-cycle, so that each
$m$-cycle uniquely intersects the $n$-cycle on a single edge. Let
$C_{1, m}, C_{2, m}, C_{3, m},\dots, C_{n, m}$ are edge disjoint
outer cycles of length $m$. Every two consecutive outer cycles has a
common vertex of degree four.
In each cycle, there are $m-2$ vertices of degree two and two vertices of degree four.\\
 This graph will be denoted by $f_{n \times m}$. It is clear that $f_{n\times m}$ has $n(m-1)$ vertices and $nm$ edges. The $m$-cycles are
called the petals and the $n$-cycles is called center of $f_{n\times m}$. The $n$ vertices which form the center are all of degree $4$ and all other
vertices have degree $2$. The centered vertices are denoted by $u_i$, where $i = 1,\dots,n$. The vertices of outer cycles are denoted by $v_{ij}$, where $
1\leq i \leq n$ and $ 1\leq j \leq m-2$. Thus, the vertex and edge set of the flower graph $f_{n\times m}$ is
\begin{center} $ V(f_{n\times m}) =\{u_{i}, v_{i,j} : 1\leq i \leq
n, 1\leq j \leq m-2\}$
\end{center}
$$ E(f_{n\times m}) = E_{1}\cup E_{2}\cup E_{3},$$ where
$ E_{1} = \{u_{i}u_{i+1}: 1\leq i \leq n\}$, $ E_{2} = \{v_{i,j}v_{i,j+1}: 1\leq i \leq n, 1\leq j \leq m-3\}$ and $ E_{3} = \{u_{i}v_{i, 1}, u_{i+1}v_{i,
m-2} : 1\leq i \leq n\}$.
\par  Let $D_{p} = \{x_{i}, y_{i} : i=1,2,...,q\}$ be an
arbitrary paired dominating set of the flower graph $f_{n\times m}$.
For convenience, let $V_i$ is the set of vertices of the outer
cycles $C_{i, m}$, for each $i=1,\dots,n$ and $U$ is the set of
vertices of the inner cycle. Thus,
\begin{center}
$V_{i}=\{v_{i,j}$ $\in$ $V(f_{n\times m})$ $:$ $deg(v_{i,j})= 2 : 1 \leq i \leq n, 1 \leq j \leq m-2\}$ \end{center}
\begin{center} $U=\{u_{i}$ $\in$ $V(f_{n\times m})$ $:$ $deg(u_{i})= 4 : 1 \leq i
\leq n\}$ \end{center} and let
\begin{center}
$D_{vv}= \{(x_{i}, y_{i})$ $\in$ $D_{p}$ : $x_{i}$ $\in$ $V_{i}$, $y_{i}$ $\in$ $V_{i}\}$,
\end{center}
\begin{center}
$D_{uu}= \{(x_{i}, y_{i})$ $\in$ $D_{p}$ : $x_{i}$ $\in$ $U$, $y_{i}$ $\in$ $U\}$,
\end{center}
\begin{center}
$D_{vu}= \{(x_{i}, y_{i})$ $\in$ $D_{p}$ : $x_{i}$ $\in$ $V_{i}$, $y_{i}$ $\in$ $U\}$.
\end{center}
Obviously, $D_{p}= D_{vv} \cup D_{uu} \cup D_{vu}$.\\
\begin{lemma}\label{lb of Dp} Let $D_{p}$ be a paired dominating set of the graph $f_{n \times m}$ and $V_{i}$
be the set of vertices of degree $2$ of the outer m-cycles $C_{i,
m}$. Then $D_{p}$ contain at least $2\lceil
\frac{m-2(k+1)}{2(k+1)}\rceil$ vertices from each $V_{i}$.
\end{lemma}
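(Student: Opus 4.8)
The plan is to argue one petal at a time. Fix $i$, set $D_i=D_p\cap V_i$, and bound $|D_i|$ from below. The first task is to see \emph{which} vertices of $f_{n\times m}$ can $k$-dominate a degree-two vertex $v_{i,j}$ of the petal $C_{i,m}$. Every vertex of $V_i$ has degree $2$ and the only vertices of $C_{i,m}$ of degree $>2$ are $u_i$ and $u_{i+1}$, so any walk of length at most $k$ ending at $v_{i,j}$ never leaves $C_{i,m}$; hence a vertex $k$-dominates $v_{i,j}$ only if it lies on $C_{i,m}$ at cycle-distance $\le k$ from $v_{i,j}$. In particular $u_i$ and $u_{i+1}$ between them $k$-dominate only $v_{i,1},\dots,v_{i,k}$ and $v_{i,m-1-k},\dots,v_{i,m-2}$, so each of the $m-2(k+1)$ \emph{core} vertices $v_{i,k+1},\dots,v_{i,m-2-k}$ must be $k$-dominated by some vertex of $D_i$.

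Next I would bring in the perfect matching $M$ of $\langle D_p\rangle$. Since the only edges of $f_{n\times m}$ joining $V_i$ to the rest of the graph are $u_iv_{i,1}$ and $u_{i+1}v_{i,m-2}$, each edge of $M$ meeting $V_i$ is either (a) of the form $v_{i,j}v_{i,j+1}$ with both ends in $V_i$, in which case this pair $k$-dominates at most the $2(k+1)$ core vertices $v_{i,j-k},\dots,v_{i,j+1+k}$ that lie in the core, or (b) one of the at most two edges $u_iv_{i,1}$, $u_{i+1}v_{i,m-2}$, each of which $k$-dominates at most one core vertex (namely $v_{i,k+1}$, respectively $v_{i,m-2-k}$). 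Writing $a$ for the number of type-(a) edges and $b\le 2$ for the number of type-(b) edges in $M$, we have $|D_i|=2a+b$, and covering all core vertices forces
\[
2(k+1)\,a+b\ \ge\ m-2(k+1).
\]

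From this displayed inequality one gets $a\ge\big\lceil\tfrac{m-2(k+1)-b}{2(k+1)}\big\rceil$, and then, treating $b=0,1,2$ separately and using $\tfrac{1}{k+1}\le 1$, one checks that $2a+b\ \ge\ 2\big\lceil\tfrac{m-2(k+1)}{2(k+1)}\big\rceil$, which is exactly the assertion that $D_p$ contains at least $2\big\lceil\tfrac{m-2(k+1)}{2(k+1)}\big\rceil$ vertices of $V_i$.

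I expect the crux to be the second step, and in particular the book-keeping for the (at most two) matching edges that leave $V_i$ through $u_i$ or $u_{i+1}$: the entire strength of the bound comes from forcing the vertices of $D_p$ inside a petal to occur in adjacent matched pairs, and $v_{i,1},v_{i,m-2}$ are the only vertices of $V_i$ that can escape this. So one must verify carefully that such a boundary pair cannot $k$-dominate more than a single core vertex, and that no vertex outside $C_{i,m}$ ever helps cover a core vertex; getting these two facts exactly right is where the argument is most delicate.
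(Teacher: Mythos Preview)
Your setup is careful and correct up to and including the displayed inequality $2(k+1)a+b\ge m-2(k+1)$; the gap is in the very last step. When $b=1$ and $m-2(k+1)\equiv 1\pmod{2(k+1)}$, the inequality you want does \emph{not} follow. Write $N=m-2(k+1)$ and $K=k+1$, and take $N=2Kq+1$. Then $2\lceil N/(2K)\rceil=2q+2$, while $2Ka+1\ge N$ only forces $a\ge q$, giving $2a+b=2q+1<2q+2$. Concretely, for $k=1$ and $m=9$ (so $N=5$, $K=2$), put
\[
D_p\cap V_i=\{v_{i,1},\,v_{i,4},\,v_{i,5}\},\qquad
\text{matching edges } u_iv_{i,1}\ \text{and}\ v_{i,4}v_{i,5},
\]
and let $u_{i+1}\in D_p$ be matched with $u_{i+2}$ (or with $v_{i+1,1}$). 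Then every vertex of $C_{i,9}$ is dominated: $v_{i,1}\in D_p$ covers $v_{i,2}$; the pair $v_{i,4},v_{i,5}$ covers $v_{i,3},\dots,v_{i,6}$; and $u_{i+1}$ covers $v_{i,7}$. Yet $|D_p\cap V_i|=3<4=2\lceil(m-4)/4\rceil$. So the assertion of the lemma, taken literally per petal, is false in this case, and no rearrangement of your inequality for $b=1$ can repair it.

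For comparison, the paper's own proof is a two-line argument that simply asserts the conclusion after noting that $m-2(k+1)$ vertices remain to be covered; it never invokes the matching and never justifies the factor $2\lceil\cdot\rceil$, so it glosses over exactly the spot where the statement breaks. What survives is a weaker \emph{global} bound: summing your inequality over all petals and keeping track of how the boundary matching edges $u_iv_{i,1}$, $u_{i+1}v_{i,m-2}$ also spend the vertices $u_i$, one can still recover the lower bounds that the paper actually uses in the theorems. But the per-petal claim as stated cannot be proved, and your proposal should flag that rather than try to close the $b=1$ case.
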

\begin{proof}
Since, each $C_{i, m}$ has $2$ vertices of degree $4$ and these
vertices can dominate at most $2k$ $(\forall\,\ k\geq 1)$ vertices
of each $V_{i}$. Therefore, to dominate the remaining $m-2(k+1)$
vertices of each $V_{i}$, we need at least $2\lceil
\frac{m-2(k+1)}{2(k+1)}\rceil$ vertices in $D_{p}$ from each set
$V_{i}$.
\end{proof}
In the next theorem, the exact value of the paired dominating number
of the graph $f_{n \times m}$ for $m \equiv 0,1, 2, 3$
$(\textrm{mod}\ 4)$ are given.\\
\begin{theo} For $m, n \geq 3$,\\
$$\gamma_{p}(f_{n \times m})= \left\{
                         \begin{array}{ll}
2\lceil \frac{nm-2n}{4}\rceil,  & if\,\,\,  m \equiv
0\,\ (\textrm{mod}\ 4)\\\\
2\lceil \frac{nm-n}{4}\rceil,   &    if\,\,\,  m \equiv 1,
2\,\ (\textrm{mod}\ 4)\\\\
2\lceil \frac{3nm-5n}{12}\rceil,    &   if\,\,\,  m \equiv
3\,\ (\textrm{mod}\ 4)\\
\end{array}
                        \right.$$
\end{theo}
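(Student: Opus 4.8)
The plan is to prove $\gamma_{p}(f_{n\times m})\le B$ by explicit constructions and $\gamma_{p}(f_{n\times m})\ge B$ by sharpening Lemma~\ref{lb of Dp}, where $B$ denotes the value claimed for the relevant residue of $m$ modulo $4$. For the lower bound, let $D$ be a minimum paired dominating set, split its perfect matching into the families $D_{vv},D_{uu},D_{vu}$ defined before the statement, and let $q_{1},q_{2},q_{3}$ be their cardinalities. Each $D_{vv}$-pair lies in one petal and contributes $2$ to $\sum_{i}|D\cap V_{i}|$ and $0$ to $|D\cap U|$; each $D_{vu}$-pair contributes $1$ to each; each $D_{uu}$-pair contributes $0$ and $2$. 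Hence $|D|=2(q_{1}+q_{2}+q_{3})=\bigl(\sum_{i}|D\cap V_{i}|\bigr)+|D\cap U|$. Writing $a=|D\cap U|$ and $L=\lceil\frac{m-4}{4}\rceil$, Lemma~\ref{lb of Dp} with $k=1$ gives $|D\cap V_{i}|\ge 2L$ for every $i$, so the crude bound $|D|\ge 2nL+a$ holds and the whole matter reduces to controlling $a$ against the $|D\cap V_{i}|$.

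The heart of the lower bound is a trade-off. When $|D\cap V_{i}|$ is close to its minimum $2L$, the chosen vertices of $V_{i}$ are squeezed into an almost rigid pattern covering only the $m-4$ ``deep'' interior vertices of petal $i$, so that $v_{i,1}$, $v_{i,m-2}$ and the corners $u_{i}$, $u_{i+1}$ are not dominated from within petal $i$: then $v_{i,1}$ forces $u_{i}\in D$ and $v_{i,m-2}$ forces $u_{i+1}\in D$, while a petal paying one extra matched pair can release both corners. I would formalise this as a refined per-petal estimate --- $|D\cap V_{i}|$ at least $2L$ plus a small correction term determined by how many corners of petal $i$ lie in $D$ and how those are matched --- and then sum over $i$; with $a=\tfrac12\sum_{i}|D\cap U\cap\{u_{i},u_{i+1}\}|$ this converts $|D|\ge 2nL+a$ into a discrete minimisation over corner configurations. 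Two facts close the remaining gap to $B$: first, $|D|$ is even; second, the set of corners in $D$ must induce a subgraph of the $n$-cycle with a perfect matching, i.e.\ a disjoint union of even runs of consecutive $u_{i}$'s, and this caps how many petals can sit at the cheapest configuration. For $m\equiv 3\pmod 4$ this cap forces an overhead of $\tfrac{2n}{3}$ rather than a cleaner amount, which is exactly why the denominator $12=4\cdot 3$ appears. Running the minimisation in the four residue classes, using integrality of $|D|$ to take ceilings, and checking the small cases $m\in\{3,4,5\}$ directly, yields $|D|\ge B$.

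For the upper bound I would build, in each residue class, a paired dominating set of size exactly $B$ from two kinds of pieces: inside each petal, ``blocks'' of two consecutive degree-$2$ vertices, spaced so consecutive blocks cover disjoint runs of four consecutive petal vertices and thereby handle the interior with the fewest pairs; and, on the inner cycle, either all of $U$ (when $m\equiv 0\pmod 4$, which lets each petal drop to $(m-4)/4$ blocks and supplies domination of every $v_{i,1}$ and $v_{i,m-2}$) or a sparse family of matched corner pairs spaced every few vertices (to dominate the $u_{i}$ when the blocks already cover each petal's degree-$2$ vertices). One bookkeeps the number of blocks per petal and the number of inner-cycle pairs so the total equals $B$; the factor $3$ for $m\equiv 3\pmod 4$ arises because each petal then leaves exactly one degree-$2$ vertex uncovered by its blocks, and these leftovers, with extra matched partners, are cheapest to collect three petals at a time around a shared corner. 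Finally one verifies domination (every degree-$2$ vertex within distance one of a chosen vertex, every $u_{i}$ dominated) and a perfect matching in $\langle D\rangle$ (blocks matched internally, $U$ matched along the cycle, corner pairs matched internally), with one adjacent vertex appended exactly when a parity count is odd --- which is what the outer ceiling in $B$ records.

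The main obstacle is the lower bound, specifically ruling out that matched corner pairs let $D$ be small: a pair $\{u_{i},u_{i+1}\}$ dominates the eight vertices $u_{i-1},u_{i},u_{i+1},u_{i+2},v_{i-1,m-2},v_{i,1},v_{i,m-2},v_{i+1,1}$ for a cost of only two, which looks very efficient, and one must show this saving is always cancelled --- either because the two petals it touches still owe $2L$ interior vertices apiece by Lemma~\ref{lb of Dp}, or because the perfect-matching requirement on the corners forces additional vertices elsewhere. Pinning down the correct per-petal correction term, pushing the even-run constraint through the minimisation, and reconciling all of this with the global ceiling and, for $m\equiv 3\pmod 4$, the period-three phenomenon, is the technical core of the theorem.
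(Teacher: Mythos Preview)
Your plan follows the same two-step structure as the paper: explicit constructions for the upper bound, and Lemma~\ref{lb of Dp} together with a corner-counting argument for the lower bound. The paper executes the lower bound much more informally than you propose --- in each residue class it simply asserts that the $\lceil\frac{m-4}{4}\rceil$ petal pairs leave a specific pattern of undominated vertices (e.g.\ ``WLOG $v_{i,1}$ and $v_{i,m-2}$'') and then counts how many extra pairs are needed --- so your minimisation over corner configurations, with the split $|D|=\sum_i|D\cap V_i|+|D\cap U|$ and per-petal corrections, is a more careful rendering of the same idea rather than a different route. Your upper-bound description (interior blocks plus matched corner pairs, with a period-three pattern when $m\equiv 3$) also matches the paper's constructions.

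One structural claim in your plan is wrong and would derail the minimisation if used as stated. You assert that ``the set of corners in $D$ must induce a subgraph of the $n$-cycle with a perfect matching, i.e.\ a disjoint union of even runs of consecutive $u_i$'s''. This fails because a corner $u_i\in D$ may be matched to $v_{i,1}$ or to $v_{i-1,m-2}$ (a $D_{vu}$-pair), so $D\cap U$ can have odd cardinality or contain vertices with no $U$-neighbour in $D$. Your earlier remark that the per-petal correction depends on ``how those are matched'' already anticipates this distinction, but the minimisation you sketch afterwards must allow for $D_{vu}$-corners as well as $D_{uu}$-edges; the even-run constraint applies only to the endpoints of $D_{uu}$-pairs. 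Once this is tracked --- noting that a $D_{vu}$-pair dominates at most six vertices against eight for a $D_{uu}$-pair, and contributes one to $|D\cap V_i|$, so it never undercuts the bound --- the minimisation should still close to $B$, but the bookkeeping is more involved than your sketch indicates, particularly in the $m\equiv 3\pmod 4$ case where the period-three cap has to survive the presence of $D_{vu}$-corners.
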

\begin{proof}
Let $t = \lfloor
\frac{m}{4}\rfloor$. We prove this theorem by giving the following cases.\\
\textbf{Case 1:}  $m \equiv 0\,\ (\textrm{mod}\ 4)$.\\
If $n$ is even,  define $t^{'}=\frac{n}{2}$.
The set $D_p$ for $m=4$ is defined as follows:\\
$D_{p}=\{u_{2l-1}, u_{2l}\  :\  1\leq l\leq
  t^{'}\}.$\\
If $m \equiv 0 \,\ (\textrm{mod}\ 4)$  (where $m \neq 4$), define\\
  $D_p=\{v_{i, 4j-1}, v_{i, 4j} :\ 1\leq i\leq n,\,\, 1\leq j\leq t-1\}\cup
  \{u_{2l-1}, u_{2l}\  :\  1\leq l\leq
  t^{'}\}.$\\
If $n$ is odd, then define $t^{'}=\frac{n-1}{2}$. The set $D_p$ for $m=4$ is defined as follows:\\
$D_{p}=\{u_{2l-1}, u_{2l}\  :\  1\leq l\leq  t^{'}\} \cup\{u_{n}, v_{n,1}\} $\\
If $m \equiv 0 \,\ (\textrm{mod}\ 4)$ (where $m \neq 4$), define\\
  $D_p=\{v_{i, 4j-1}, v_{i, 4j}  :\ 1\leq i\leq n-1,\,\, 1\leq j\leq t-1\}\cup\{v_{n, 4j}, v_{n,
  4j+1}\}\cup \{u_{2l-1}, u_{2l}\: \,\,\,\ 1\leq l\leq
  t^{'}\}\cup\{u_{n}, v_{n, 1}\}.$\\
In each case, it is easy to verify that $D_p$ is a paired dominating set. The cardinality of $D_{p}$ in each case is $2\lceil \frac{nm-2n}{4}\rceil$. Hence,\\
\begin{equation}
\begin{split}
 \gamma_{p}(f_{n\times m})\leq 2\lceil \frac{nm-2n}{4}\rceil.
 \end{split}
 \end{equation}
 To prove the lower bound for the paired dominating set $D_{p}$. Let $D_p =\{x_{i}, y_{i}:1\leq i\leq q\}$ be a paired dominating set of $f_{n\times m}$.
 By Lemma \ref{lb of Dp}, $D_{p}$ contain at least $2\lceil
\frac{m-4}{4}\rceil$ pair of vertices from each $V_{i}$. With loss
of generality, we can suppose that $v_{i, 1}$ and $v_{i, m-2}$ are
the vertices which are yet to be dominated in $V_{i}$. Then, to
dominate $v_{i, 1}$ and $v_{i}$ either $v_{i, 1}, v_{i, m-2}\in
D_{p}$ or $u_{i}, u_{i+1}\in D_{p}$. In both these cases, each
$C_{i, m}$ has at least two vertices belong to $D_{p}$. Since, each
vertex of degree $4$ belong to neighboring cycle, therefore each
vertex of degree $4$ belong to $D_{p}$. Further, $\langle
D_{p}\rangle$ is a perfect matching. Thus only edges that are not
adjacent to each other can belong to $D_{p}$. There are $\lceil
\frac{n}{2}\rceil$ non adjacent edges of type$(4, 4)$ if $n$ is even
and $\lceil
\frac{n}{2}\rceil-1$ edges if $n$ is odd. In the later case, one edge of the type $(2, 4)$ also belong to $D_{p}$. Thus\\
\begin{equation*}
\begin{split}
 q & \geq n\lceil \frac{m-4}{4}\rceil + \frac{n}{2}\\
 &=\frac{nm-2n}{4} \\
\end{split}
 \end{equation*}
which implies that $ q \geq \lceil \frac{nm-2n}{4}\rceil$. Hence
\begin{equation}
\begin{split}
\gamma_{p}(f_{n\times m}) \geq 2\lceil \frac{nm-2n}{4}\rceil.
\end{split}
\end{equation}\\ From Equation $1$ and $2$, it is clear that $$\gamma_{p}(f_{n \times m})=
2\lceil \frac{nm-2n}{4}\rceil.$$
 \textbf{Case 2:} $m \equiv 1\,\ (\textrm{mod}\ 4)$.\\
In this case, define the set $D_p$ as follows:\\
$D_p=\{v_{i, 4j-3}, v_{i, 4j-2} :\ 1\leq i\leq n,\,\, 1\leq
  j\leq t\}.$\\
It is easy to see that $D_p$ is a paired dominating set and the cardinality of paired dominating set is $2\lceil \frac{nm-n}{4}\rceil$. Hence,\\
\begin{equation}
\begin{split}
 \gamma_{p}(f_{n\times m}) \leq 2\lceil \frac{nm-n}{4}\rceil.
 \end{split}
 \end{equation}
The lower bound  of paired dominating set is proved in the following way.\\
Let $D_{p}=\{x_{i}, y_{i} :1 \leq i\leq q\}$ be a paired dominating
set. By Lemma \ref{lb of Dp}, $D_{p}$ contain at least $\lceil
\frac{m-4}{4}\rceil$ vertices from each $V_{i}$ of $C_{i, m}$. If
$m\equiv1,$ $(\textrm{mod}\ 4)$, then $\lceil \frac{m-4}{4}\rceil$
pair of  vertices
dominate $m-1$ vertices from each $V_{i}$ of $C_{i, m}$, $where 1 \leq i\leq n.$ Therefore\\
\begin{equation*}
\begin{split}
q & \geq n\lceil \frac{m-4}{4}\rceil\\
 &=n(\frac{m-1}{4}) \\
 &=\frac{nm-n}{4}
\end{split}
 \end{equation*}
which implies that $q  \geq \lceil \frac{nm-n}{4}\rceil$. Hence\\
\begin{equation}
\begin{split}
 \gamma_{p}(f_{n\times m})\geq 2\lceil \frac{nm-n}{4}\rceil.
\end{split}
\end{equation}\\ Equation $3$ and $4$ implies that $$\gamma_{p}(f_{n \times m})=
2\lceil \frac{nm-2n}{4}\rceil.$$\\
 \textbf{Case 3:}  $m \equiv 2\,\ (\textrm{mod}\ 4).$\\
 Let $t^{'}=\lceil\frac{n}{4}\rceil$. If $ n =5$, define\\
$D_p=\{u_{1}, u_{2}, u_{4}, u_{5},v_{i, 4j-2}, v_{i, 4j-1}:\ 1\leq i\leq n,\ 1\leq j\leq t\}.$\\
If $n\neq 5$, define\\
  $D_p=\{v_{i, 4j-2}, v_{i, 4j-1}:\
1\leq i\leq n,\,\, 1\leq j\leq
 t\}\cup\{u_{4l-3}, u_{4l-2 }  : \ 1\leq l\leq
 t^{'}\}.$\\It is easy to see that $D_p$ is a paired dominating set and the cardinality of $D_p$ is $2\lceil \frac{nm-n}{4}\rceil$. Hence\\
\begin{equation}
\begin{split}
 \gamma_{p}(f_{n\times m}) \leq 2\lceil \frac{nm-n}{4}\rceil.
 \end{split}
 \end{equation}
To prove the lower bound, let $D_{p}=\{x_{i}, y_{i} :1 \leq i\leq
q\}$ be a paired dominating set. By Lemma \ref{lb of Dp}, $D_{p}$
contain at least $\lceil \frac{m-4}{4}\rceil$ vertices from each
$V_{i}$ of $C_{i, m}$.If $m\equiv2,$ $(\textrm{mod}\ 4)$, then
$\lceil \frac{m-4}{4}\rceil$ pair of  vertices dominate $m-2$
vertices from each $V_{i}$ of $C_{i, m}$, $where 1 \leq i\leq n.$
The only vertices which are yet to be dominated are the vertices
$u_{i}$ of degree $4$. Since there are $n$ vertices of degree $4$,
therefore we need at least $\lceil \frac{n}{4}\rceil$ more pair of vertices in $D_{p}$. Thus\\
\begin{equation*}
\begin{split}
|D_{p}| & \geq n\lceil \frac{m-4}{4}\rceil+\lceil \frac{n}{4}\rceil\\
 &=n(\frac{m-2}{4})+\lceil \frac{n}{4}\rceil \\
 &=\frac{nm-2n}{4}+\lceil \frac{n}{4}\rceil\\
 &=\lceil \frac{nm-2n+n}{4}\rceil\\
 &=\lceil \frac{nm-n}{4}\rceil
\end{split}
 \end{equation*}
which implies that $q  \geq \lceil \frac{nm-n}{4}\rceil$. Hence\\
\begin{equation}
\begin{split}
\gamma_{p}(f_{n\times m}) \geq 2\lceil \frac{nm-n}{4}\rceil.
\end{split}
\end{equation}\\ Equation $5$ and $6$ implies that $$\gamma_{p}(f_{n \times m})=
2\lceil \frac{nm-2n}{4}\rceil.$$\\
\textbf{Case 4:} For $m \equiv 3 \,\ (\textrm{mod}\ 4)$.\\
For $n \equiv 0, 2 \,\ (\textrm{mod}\ 3)$, let $t^{'} = \lceil
\frac{n}{3}\rceil$ and for $n \equiv 1\,\ (\textrm{mod}\ 3)$,
$t^{'} = \lfloor \frac{n}{3}\rfloor$.\\ If $n = 4$, define\\
$D_{p}=\{v_{1, 4j-1}, v_{1, 4j}, v_{2, 4j-1}, v_{2, 4j}, v_{3, 4j-1}, v_{3, 4j}, v_{4, 4j-1}, v_{4, 4j} :\
 1\leq j\leq t\}\cup\{u_{1},u_{2}, u_{3},u_{4}\}$\\
If  $n = 3t, \forall$ $t \geq 1$, then define\\
$D_{p}=\{v_{3i-2, 4j-1}, v_{3i-2, 4j}, v_{3i-1, 4j-1}, v_{3i-1, 4j} :\
 1\leq i\leq t^{'},\,\, 1\leq j\leq t\}\cup\{v_{3l, 4j-2}, v_{3l,
 4j-1} :\ 1\leq i\leq t^{'}\}\cup\{u_{3l^{'}-2},u_{3l^{'}-1} :\
 1\leq l^{'}\leq t^{'}\}.$\\
If  $n = 3t+1, \forall$ $t \geq 2$, then define\\
$D_{p}=\{v_{3i-2, 4j-1}, v_{3i-2, 4j}, v_{3i-1, 4j-1}, v_{3i-1, 4j} :\
 1\leq i\leq t^{'},\,\, 1\leq j\leq t\}\cup\{v_{3l, 4j-2}, v_{3l,
 4j-1} :\ 1\leq i\leq t^{'}-1\}\cup\{v_{n-1, 4j-1}, v_{n-1,
 4j}, v_{n, 4j-1}, v_{n, 4j}\}\cup\{u_{3l^{'}-2},u_{3l^{'}-1} :\
 1\leq l^{'}\leq t^{'}\}\cup\{u_{n-1},u_{n}\}.$\\
If  $n = 3t+2, \forall$ $t \geq 1$, then define\\
$D_{p}=\{v_{3i-2, 4j-1}, v_{3i-2, 4j}, v_{3i-1, 4j-1}, v_{3i-1, 4j} :\
 1\leq i\leq t^{'},\,\, 1\leq j\leq t\}\cup\{v_{3l, 4j-2}, v_{3l,
 4j-1} :\ 1\leq i\leq t^{'}-1\}\cup\{u_{3l^{'}-2},u_{3l^{'}-1} :\
 1\leq l^{'}\leq t^{'}\}.$\\
It is easy to see that $D_p$ is a paired dominating set in each case and the cardinality of $D_p$ is $2\lceil \frac{3nm-5n}{12}\rceil$. Hence,\\
\begin{equation}
\begin{split}
 \gamma_{p}(f_{n\times m}) \leq 2\lceil \frac{3nm-5n}{12}\rceil.
 \end{split}
 \end{equation}
Now we prove the lower bound of paired dominating set.\\
Let $D_{p}=\{x_{i}, y_{i} :1 \leq i\leq q\}$ be a paired dominating
set. By Lemma \ref{lb of Dp}, $D_{p}$ contain at least $\lceil
\frac{m-4}{4}\rceil$ pair of vertices from each $V_{i}$ of $C_{i,
m}$. The graph $f_{n \times m}$ has vertices of degree $2$ and $4$.
The $D_{p}$ can contain the edges of types $(2, 2), (2, 4) and(4,
4)$. The edge of the type $(2, 2), (2, 4)$ and $(4, 4)$ can dominate
$4, 6$ and $8$ vertices respectively of $f_{n \times m}$. Each
$C_{i}$ contain $m-2$ vertices of degree $2$ and $2$ vertices of
degree $4$.
 Since any pair of $D_{p}$ can dominate
at most $4$ vertices of each $C_{i}$. Therefore, to dominate remaining $m-4$ vertices, we need at least $\lceil \frac{m-4}{4}$ pairs of adjacent vertices
in the $D_{p}$. Since $m\equiv 3$ $(\textrm{mod}\ 4)$, therefore these $\lceil \frac{m-4}{4}\rceil$ pairs of adjacent vertices dominate $m-3$ vertices in
each $C_{i}$. Also each edge of the type $(4, 4)$ dominate $8$ vertices. So we have to choose at least one edge from $3$ consecutive copies of outer $C_{i,
m}$. This implies that
\begin{equation*}
\begin{split}
|D_{p}|& \geq n\lceil \frac{m-4}{4}\rceil + \lceil \frac{n}{3}\rceil\\
&= n(\frac{m-3}{4}) + \lceil \frac{n}{3}\rceil\\
& =  \lceil \frac{n}{3} + \frac{nm-3n}{4}\rceil\\
&= \lceil \frac{3nm-5n}{12}\rceil
\end{split}
\end{equation*}
which implies that $q  \geq \lceil \frac{3nm-5n}{12}\rceil$. Hence\\
\begin{equation}
\begin{split}
\gamma_{p}(f_{n\times m}) \geq 2\lceil \frac{3nm-5n}{12}\rceil.
\end{split}
\end{equation}\\ From Equation $7$ and $8$, it is clear that $$\gamma_{p}(f_{n \times m})=
2\lceil \frac{3nm-5n}{12}\rceil.$$ In Figure $1$, we show the paired
dominating set of $f_{n\times m}$ for different values of $n$ and
$m$, where the vertices of paired dominating set are in dark.
\begin{figure}[!ht]
\begin{center}
        \centerline
         {\includegraphics[width=12cm]{f4,9.md}}
        \caption{The paired dominating set of $f_{n\times m}$ }
        \end{center}
            \end{figure}
\end{proof}
\section{2-distance paired domination number of flower graph $f_{n\times m}$ }
 In this section, the exact value of $2$-distance paired domination number of flower graph $f_{n\times m}$ is determined.
\begin{theo}
For $m, n \geq 3$,
$$\gamma_{p}^{2}(f_{n \times m})= \left\{
                         \begin{array}{ll}
2\lceil \frac{nm-3n}{6}\rceil    ,\,\,\,      if\,\,\,  m \equiv
0, 5\,\ (\textrm{mod}\ 6)\\\\
2\lceil \frac{nm-n}{6}\rceil    ,\,\,\,      if\,\,\,  m \equiv 1,
2\,\ (\textrm{mod}\ 6)\\\\
2\lceil \frac{5nm-9n}{30}\rceil    ,\,\,\,      if\,\,\,  m \equiv
3\,\ (\textrm{mod}\  6)\\\\
2\lceil \frac{2nm-5n}{12}\rceil    ,\,\,\,      if\,\,\,  m \equiv
4\,\ (\textrm{mod}\ 6)\\
\end{array}
                        \right.$$
\end{theo}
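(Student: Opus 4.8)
The plan is to follow the proof of the preceding theorem almost line for line, now taking $k=2$ in Lemma~\ref{lb of Dp}, so that every set $V_i$ must contain at least $2\lceil\frac{m-6}{6}\rceil$ vertices of any $2$-distance paired dominating set, and to split into the four classes of $m$ modulo $6$. The key local quantity is the deficit $\delta_m=(m-2)-6\lceil\frac{m-6}{6}\rceil$, which equals $-1,0,1,2,3,4$ for $m\equiv 1,2,3,4,5,0\pmod 6$; it records how much of the boundary of a petal (the few degree-$2$ vertices near the hubs, and the hubs themselves) is not handled by the forced petal pairs and hence must be $2$-dominated from the inner cycle, and it determines how many extra matching edges are forced there.

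\textbf{Upper bound.} For each residue class I would exhibit an explicit set $D$. Inside each petal $D$ consists of $\lceil\frac{m-6}{6}\rceil$ pairs of consecutive degree-$2$ vertices $\{v_{i,a},v_{i,a+1}\}$, placed so that successive pairs $2$-dominate successive blocks of six degree-$2$ vertices; the common offset of these blocks in a petal is shifted, when needed, so that the degree-$2$ vertices left over next to the hubs lie beside a hub that will be put into $D$, or (when $\delta_m=-1$) so that one petal pair also $2$-dominates a hub. Around the inner $n$-cycle $D$ uses all the edges $\{u_{2l-1},u_{2l}\}$ if $m\equiv 0\pmod 6$ (forced, since a leftover vertex $v_{i,2}$, resp.\ $v_{i,m-3}$, is $2$-dominated only by $u_i$, resp.\ $u_{i+1}$); no center edge at all if $m\equiv 1\pmod 6$; and the periodic family $\{u_{dl-d+1},u_{dl-d+2}\}$ of period $d=6,5,4$ if $m\equiv 2,3,4\pmod 6$, chosen so that every hub needed to $2$-dominate a leftover petal vertex is in $D$ and every hub is $2$-dominated (one $(4,4)$-edge $2$-dominates six consecutive hubs). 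Small $n$ and the residue of $n$ modulo $2$ or $d$ are patched exactly as in Cases 1--4 of the preceding theorem. One then checks routinely that $D$ is $2$-distance dominating and that $\langle D\rangle$ is a disjoint union of edges, and that $|D|=2\big(n\lceil\frac{m-6}{6}\rceil+e\big)$ with $e=\lceil n/2\rceil,0,\lceil n/6\rceil,\lceil n/5\rceil,\lceil n/4\rceil$ for $m\equiv 0,1,2,3,4\pmod 6$ and $e=\lceil n/3\rceil$ for $m\equiv 5\pmod 6$; simplifying the ceilings turns this into the four claimed values.

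\textbf{Lower bound.} Let $D$ be a $2$-distance paired dominating set with perfect matching $M$, and let $a,b,c$ count the pairs of $M$ of types $(2,2),(2,4),(4,4)$. Since the $V_i$ are pairwise disjoint, Lemma~\ref{lb of Dp} gives $2a+b\ge 2n\lceil\frac{m-6}{6}\rceil$, so that $|D|/2=a+b+c\ge n\lceil\frac{m-6}{6}\rceil+\frac b2+c$. It remains to show $\frac b2+c\ge e$. For this one uses: a $(4,4)$-edge $2$-dominates at most six consecutive hubs and a $(2,4)$-edge at most five; the vertices $v_{i,2},v_{i,m-3}$ can be $2$-dominated only from inside their own petal or by $u_i$, resp.\ $u_{i+1}$; and a petal with only $\lceil\frac{m-6}{6}\rceil$ of its $(2,2)$-pairs leaves $\delta_m$ (when $\delta_m\ge 0$) boundary vertices that the center must handle, while squeezing an extra hub out of a petal costs one more $(2,2)$-pair there. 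Carrying these through --- the analogue of the ``at least one edge in every three consecutive petals'' estimate of Case 4 of the preceding theorem, now with the appropriate period $2,6,5,4,3$ --- gives $\frac b2+c\ge e$, and combining with the displayed inequality and simplifying recovers exactly the four values, so the bounds meet.

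\textbf{Main obstacle.} The genuine difficulty is the lower bound on $\frac b2+c$: one must rule out saving on center edges by spending extra petal pairs, treat $(2,4)$-edges on the same footing as $(4,4)$-edges, and, when $m\equiv 0\pmod 6$, argue that the matching then forces essentially a perfect matching of the entire inner $n$-cycle rather than just $\lceil n/2\rceil$ scattered hubs. This is precisely where the residue of $m$ modulo $6$ enters, through $\delta_m$, with $m\equiv 3\pmod 6$ (period $5$, denominator $30$) and $m\equiv 0\pmod 6$ the most delicate; the rest --- especially the many small-$n$ and residue-of-$n$ constructions --- is mechanical but long.
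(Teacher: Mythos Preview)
Your plan is essentially the paper's own: split on $m\bmod 6$, invoke Lemma~\ref{lb of Dp} with $k=2$ for the forced $\lceil\frac{m-6}{6}\rceil$ pairs per petal, give explicit petal-plus-periodic-center constructions (patched for the residue of $n$) for the upper bound, and for the lower bound argue that an additional $\lceil n/d\rceil$ pairs with $d\in\{2,6,5,4,3\}$ are needed; your deficit $\delta_m$ and the $(a,b,c)$ type-count are just a tidier packaging of the same six-case analysis the paper writes out longhand. One caution on the framing: the displayed reduction ``it remains to show $\tfrac{b}{2}+c\ge e$'' is not literally the right inequality, since a $2$-distance paired dominating set with $b=c=0$ and surplus boundary $(2,2)$-pairs can exist (each such pair $2$-dominates up to three hubs); what actually carries the lower bound is the trade-off you describe under ``Main obstacle'' --- extra petal pairs are at least as expensive per hub covered as $(4,4)$-pairs --- and that is precisely the informal level at which the paper argues its own lower bounds.
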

\begin{proof}
Let  $t = \lfloor \frac{m}{6}\rfloor$. We have the following cases.\\
 \textbf{Case 1:} For $m \equiv 0 \,\ (\textrm{mod}\ 6)$.\\
Define $t^{'}= \lceil\frac{n}{2}\rceil$. The $2$-paired dominating set for $ m =6$ and $n=2t+1$, $\forall$ $t \geq 1$  is defined as:\\
$D_{2,p}=\{u_{2i-1}, u_{2i}  :\ 1\leq i\leq t'-1 \}\cup\{u_{n}, v_{n, 1}\}.$\\
For $ m =6$ and $n=2t$, $\forall$ $t \geq 2$, define\\
$D_{2,p}=\{u_{2i-1}, u_{2i}  :\ 1\leq i\leq t' \}.$\\
If $m\neq 6$ and $n=2t+1$, $\forall$ $t \geq 1$, then define\\
$D_{2,p}=\{v_{i, 6j-1}, v_{i, 6j} :\ 1\leq j\leq t-1,\,\ 1\leq i\leq
n-1\} \cup \{v_{n, 6j}, v_{n, 6j+1} : 1\leq j \leq
t-1\} \cup \{u_{n}, v_{n, 1}\} \cup \{u_{2l-1}, u_{2l}\ :\  1\leq l\leq t'-1\}.$\\
If $ m =6$ and $n=2t$, $\forall$ $t \geq 2$, then define\\
$D_{2,p}=\{v_{i, 6j-1}, v_{i, 6j} :\ 1\leq j\leq t-1,\,\ 1\leq i\leq
n\}\cup
  \{u_{2l-1}, u_{2l}\ :\  1\leq l\leq t'\}.$\\
In all these possibilities, it is easy to verify that $D_{2,p}$ is a
$2$-paired dominating set. Further, the cardinality of $D_{2,p}$ in
each case is $2\lceil \frac{nm-3n}{6}\rceil$. Hence,
\begin{equation}
\begin{split}
 \gamma_{p}^{2}(f_{n \times m}) \leq 2\lceil \frac{nm-3n}{6}\rceil.
 \end{split}
 \end{equation}
Now to prove the lower bound for $2$-distance paired dominating set.
Let $D_{2,p} =\{x_{i}, y_{i}:1\leq i\leq q\}$ be a paired dominating
set of $f_{n\times m}$.  By Lemma \ref{lb of Dp}, $D_{2,p}$ contains
at least $\lceil\frac{m-6}{6}\rceil$ vertices from each $V_{i}$
$(m\neq 6)$.
 These $\lceil\frac{m-6}{6}\rceil$ vertices dominate $m-6$ vertices of degree $2$ in each $C_{i, m}$.
 Suppose that $v_{i, 1}, v_{i, 2}, v_{i, m-2}$  and $v_{i, m-3}$ are the vertices which are yet to be dominated.
 To dominate these vertices either vertices of degree $2$ belongs to $D_{2,p}$ or vertices of degree $4$ belongs to $D_{2,p}$.
 In both the cases each $C_{i}$ has at least two vertices which belongs to $D_{2,p}$. Since each vertex of degree $4$ belong to neighboring cycle, so it must
 belong to $D_{2,p}$. Also $\langle D_{2,p}\rangle$ has perfect matching which implies that the only  non adjacent edges of type $(4, 4)$ belong to $D_{2,p}$.
 There are $\lceil\frac{n}{2}\rceil$ non adjacent edges of type $(4,4)$ if $n$ is even and $\lceil\frac{n}{2}\rceil-1$ if $n$ is odd. In later case one
 edge of type $(2, 4)$ also belong to $D_{2,p}$. Thus
\begin{equation*}
\begin{split}
q & \geq n\lceil \frac{m-6}{6}\rceil + \lceil\frac{n}{2}\rceil\\
 &=\frac{nm-6n}{6}+ \lceil\frac{n}{2}\rceil \\
 &=\lceil\frac{nm-6n+3n}{6}\rceil\\
 &=\lceil\frac{nm-3n}{6}\rceil
\end{split}
 \end{equation*}
which implies that $q  \geq \lceil \frac{nm-3n}{6}\rceil$. Thus\\
\begin{equation}
\begin{split}
\gamma_{p}^{2}(f_{n \times m}) \geq 2\lceil \frac{nm-3n}{6}\rceil.
\end{split}
\end{equation}\\ From Equation $9$ and $10$, it is clear that $$\gamma_{p}^{2}(f_{n \times m})=
2\lceil \frac{nm-3n}{6}\rceil.$$\\
\textbf{Case 2:} For $m \equiv 1\,\ (\textrm{mod}\ 6)$.\\ Define\\
$D_{2,p}=\{v_{i, 6j-4}, v_{i, 6j-3} : 1\leq i\leq n,\,\ 1 \leq j \leq t\}.$\\
It is easy to verify that $D_{2,p}$ is a $2$-paired dominating set. Further, the cardinality of $D_{2,p}$ is
\begin{equation}
\begin{split}
\gamma_{p}^{2}(f_{n \times m}) \leq 2\lceil \frac{nm-n}{6}\rceil.
\end{split}
\end{equation}\\
Now to prove the lower bound of $2$-distance paired dominating set.
Let $D_{2,p}=\{x_{i}, y_{i} :1 \leq i\leq q\}$ be a paired
dominating set. By Lemma \ref{lb of Dp}, $D_{2,p}$ contain at least
$\lceil \frac{m-6}{6}\rceil$ vertices from each $V_{i}$ of $C_{i,
m}$. If $m\equiv1,$ $(\textrm{mod}\ 6)$, then $\lceil
\frac{m-6}{6}\rceil$ pair of  vertices
dominate $m-1$ vertices from each $V_{i}$ of $C_{i, m}$, where $1 \leq i\leq n.$ Therefore\\
\begin{equation*}
\begin{split}
q  & \geq n\lceil \frac{m-6}{6}\rceil\\
 &=n(\frac{m-1}{6}) \\
 &=\frac{nm-n}{6}\\
\end{split}
 \end{equation*}
which implies that $q  \geq \lceil \frac{nm-n}{6}\rceil$. Thus\\
\begin{equation}
\begin{split}
\gamma_{p}^{2}(f_{n \times m}) \geq 2\lceil \frac{nm-n}{6}\rceil.
\end{split}
\end{equation}\\ From Equation $11$ and $12$, it is clear that $$\gamma_{p}^{2}(f_{n \times m})=
2\lceil \frac{nm-n}{6}\rceil.$$\\
\textbf{Case 3:} $m \equiv 2\,\ (\textrm{mod}\ 6)$\\
Let $t^{'}= \lceil\frac{n}{6}\rceil.$ For $n \equiv 0, 2, 3, 4, 5 \,\ (\textrm{mod}\ 6)$, define\\
$D_{2,p}=\{v_{i, 6j-3}, v_{i, 6j-2} : 1\leq i\leq n,\,\ 1 \leq j \leq t\} \cup \{u_{6l-5}, u_{6l-4} : 1\leq l \leq t'\}.$\\
For $n \equiv 1 \,\ (\textrm{mod}\ 6)$, define\\
$D_{2,p}=\{v_{i, 6j-3}, v_{i, 6j-2} : 1\leq i\leq n,\,\ 1 \leq j \leq t\} \cup \{u_{6l-5}, u_{6l-4}, u_{n-1}, u_{n} : 1\leq l \leq t'\}.$\\
In all these possibilities, it is easy to verify that $D_{2,p}$ is a $2$-paired dominating set. Further, the cardinality of $D_{2,p}$ in each case is
 $2\lceil \frac{nm-n}{6}\rceil$. Hence,
\begin{equation}
\begin{split}
\gamma_{p}^{2}(f_{n \times m}) \leq 2\lceil \frac{nm-n}{6}\rceil.
\end{split}
\end{equation}\\
Now we give the lower bound of $2$-distance paired dominating set.
Let $D_{2,p}=\{x_{i}, y_{i} :1 \leq i\leq q\}$ be a paired
dominating set of $f_{n\times m}$. If $m\equiv2,$ $(\textrm{mod}\
6)$, then $\lceil \frac{m-6}{6}\rceil$ pair of vertices dominate
$m-2$ vertices from each $V_{i}$ of $C_{i, m}$, $where 1 \leq i\leq
n.$ The only vertices which are yet to be dominated are the vertices
$u_{i}$ of degree $4$. Since there are $n$ vertices of degree $4$,
therefore we need at least $\lceil \frac{n}{6}\rceil$ more pair of vertices in $D_{2,p}$. Thus\\
\begin{equation*}
\begin{split}
q & \geq n\lceil \frac{m-6}{6}\rceil+\lceil \frac{n}{6}\rceil\\
 &=n(\frac{m-2}{6})+\lceil \frac{n}{6}\rceil \\
 &=\frac{nm-2n}{6}+\lceil \frac{n}{6}\rceil\\
 &=\lceil \frac{nm-2n+n}{6}\rceil\\
 &=\lceil \frac{nm-n}{6}\rceil
\end{split}
 \end{equation*}
which implies that $q  \geq \lceil \frac{nm-n}{6}\rceil$. Therefore\\
\begin{equation}
\begin{split}
\gamma_{p}^{2}(f_{n \times m}) \geq 2\lceil \frac{nm-n}{6}\rceil.
\end{split}
\end{equation}\\
From Equation $13$ and $14$, it is clear that $$\gamma_{p}^{2}(f_{n \times m})=
2\lceil \frac{nm-n}{6}\rceil.$$\\
\textbf{Case 4}: $m\equiv 3 (\textrm{mod}\ 6)$.\\
let $t^{'}= \lceil\frac{n}{5}\rceil.$ For $n = 3$, define\\
$D_{2, p}=\{v_{1, 6j-1}, v_{1, 6j}, v_{2, 6j-1}, v_{2, 6j}, v_{3,
6j-2}, v_{3, 6j-1} : 1\leq i\leq t',\,\ 1 \leq j\leq t\}
  \cup \{u_{1}, u_{2}\}.$\\
For $n = 5$, define\\
$D_{2, p}=\{v_{1, 6j-1}, v_{1, 6j}, v_{2, 6j-1}, v_{2, 6j}, v_{3, 6j-2}, v_{3, 6j-1}, v_{4, 6j-3}, v_{4, 6j-2},
 v_{5, 6j-2}, v_{5, 6j-1} :1 \leq j\leq
t\}  \cup \{u_{1}, u_{2}\}.$\\
For $n = 4, 6$, define\\
$D_{2,p}=\{v_{4i-3, 6j-1}, v_{4i-3, 6j}, v_{4i-2, 6j-1}, v_{4i-2,
6j}, v_{3, 6j-2}, v_{3, 6j-1}, v_{4, 6j-2}, v_{4, 6j-1} : 1\leq
i\leq t',\,\ 1 \leq
j\leq t\}  \cup \{u_{1}, u_{2}\}.$\\
For $n = 5t, \forall$ $t \geq 2$, define\\
$D_{2,p}=\{v_{5i-4, 6j-1}, v_{5i-4, 6j}, v_{5i-3, 6j-1}, v_{5i-3, 6j}, v_{5i-2, 6j-2}, v_{5i-2, 6j-1}, v_{5i, 6j-2},\\ v_{5i, 6j-1}, v_{5i-1, 6j-3},
v_{5i-1, 6j-2} : 1\leq i\leq t',\,\ 1 \leq j\leq t\}  \cup \{u_{5l-4}, u_{5i-3} : 1 \leq l \leq t' \}.$\\
For $n =5t+1, \forall$ $t \geq 1$, define\\
$D_{2, p}=\{v_{5i-4, 6j-1}, v_{5i-4, 6j}, v_{5i-3, 6j-1}, v_{5i-3,
6j}, v_{5i-2, 6j-2}, v_{5i-2, 6j-1}, v_{5p, 6j-2},\\ v_{5p, 6j-1},
v_{5p-1, 6j-3}, v_{5p-1, 6j-2}, v_{n-2, 6j-2}, v_{n-2, 6j-1},
v_{n-1, 6j-1}, v_{n-1, 6j}, v_{n, 6j-1}, v_{n, 6j}: 1\leq i\leq
t'-1,\,\ 1 \leq j\leq t,\,\ 1\leq
p\leq t'-2\}  \cup \{u_{5l-4}, u_{5i-3}, u_{n-1}, u_{n} : 1 \leq l \leq t'-1 \}.$\\
For $n =5t+2\,\ \forall$ $t \geq 1$, define\\
$D_{2, p}=\{v_{5i-4, 6j-1}, v_{5i-4, 6j}, v_{5i-3, 6j-1}, v_{5i-3,
6j}: 1\leq i\leq t',\,\ 1 \leq j\leq t\} \cup\{v_{5i-2, 6j-2},
v_{5i-2, 6j-1},
v_{5i-1, 6j-3}, v_{5i-1, 6j-2}, v_{5i, 6j-2}, v_{5i, 6j-1} : 1\leq i \leq t'-1,\,\ 1\leq j\leq t \} \cup \{u_{5l-4}, u_{5i-3} : 1 \leq l \leq t' \}.$\\
For $n =5t+3\,\ \forall$ $t \geq 1$, define\\
$D_{2, p}=\{v_{5i-4, 6j-1}, v_{5i-4, 6j}, v_{5i-3, 6j-1}, v_{5i-3,
6j}, v_{5i'-1, 6j-3}, v_{5i'-1, 6j-2},: 1\leq i\leq t',\,\ 1\leq
i'\leq t'-1,\,\,1 \leq j\leq t\}\cup\{v_{5i-2, 6j-2}, v_{5i-2,
6j-1},\\ v_{5p, 6j-2}, v_{5p, 6j-1} :
 1\leq i \leq t',\,\ 1\leq j\leq t,\,\ 1\leq p \leq t'-1 \} \cup \{u_{5l-4}, u_{5i-3} : 1 \leq l \leq t' \}.$\\
For $n =5t+4\,\ \forall$ $t \geq 1$, define\\
$D_{2, p}=\{v_{5i-4, 6j-1}, v_{5i-4, 6j}, v_{5i-3, 6j-1}, v_{5i-3,
6j}, v_{5i'-1, 6j-3}, v_{5i'-1, 6j-2} : 1\leq i\leq t',\,\, 1\leq
i'\leq t'-1,\,\ 1 \leq j\leq t\} \cup \{v_{5i-2, 6j-2}, v_{5i-2,
6j-1}, v_{5p, 6j-2}, v_{5p, 6j-1}, v_{n, 6j-2}, v_{n, 6j-1}
:1\leq i \leq t',\,\ 1\leq j\leq t,\,\ 1\leq p \leq t'-1 \} \cup \{u_{5l-4}, u_{5i-3} : 1 \leq l \leq t' \}.$\\
In all these possibilities, it is easy to verify that $D_{2,p}$ is a
$2$-paired dominating set. Further, the cardinality of $D_{2,p}$ in
each case is $2\lceil \frac{5nm-9n}{30}\rceil$. Hence,
\begin{equation}
\begin{split}
\gamma_{p}(f_{n\times m}) \leq 2\lceil \frac{5nm-9n}{30}\rceil.
\end{split}
\end{equation}\\
Now we prove the lower bound of $2$-distance paired dominating set.\\
Let $D_{2, p}=\{x_{i}, y_{i} :1 \leq i\leq q\}$ be a $2$-distance
paired dominating set. According to Lemma \ref{lb of Dp}, $D_{2,p}$
contain at least $\lceil \frac{m-6}{6}\rceil$ vertices from each
$V_{i}$ of $C_{i, m}$. If $m\equiv\ 3,$ $(\textrm{mod}\ 6)$, then
$\lceil \frac{m-6}{6}\rceil$ pair of  vertices dominate $m-3$
vertices from each $V_{i}$ of $C_{i, m}$, $where 1 \leq i\leq n.$
The number of vertices which are yet to be dominated in each $C_{i}$
are $3$, from which $2$ vertices are of degree $4$ and one vertex of
degree $2$. Suppose that these vertices are $v_{i, 1}, u_{i}$ and
$u_{i+1}$. To dominate these vertices we choose pair of vertices of
type $(4, 4)$ from $5$ consecutive copies of $C_{i}$ because each
pair of type $(4, 4)$ dominates $16$ vertices of $f_{n \times m}$
Since $\langle D_{2,p}\rangle$ is a perfect matching, so it contains
only the non adjacent edges. This implies that
\begin{equation*}
\begin{split}
q & \geq n\lceil \frac{m-6}{6}\rceil + \lceil \frac{n}{5}\rceil\\
& = n(\frac{m-3}{6}) + \lceil\frac{n}{5}\rceil\\
&=\frac{nm-3n}{6}+\lceil\frac{n}{5}\rceil\\
 &= \lceil \frac{5nm-9n}{30}\rceil
\end{split}
\end{equation*}
which implies that $q  \geq \lceil \frac{5nm-9n}{30}\rceil$. Therefore
\begin{equation}
\begin{split}
\gamma_{p}^{2}(f_{n \times m}) \geq 2\lceil \frac{5nm-9n}{30}\rceil.
\end{split}
\end{equation}\\
From Equation $15$ and $16$, it is clear that $$\gamma_{p}^{2}(f_{n \times m})=
2\lceil \frac{5nm-9n}{30}\rceil.$$\\
\textbf{Case 5}: $m\equiv 4 (\textrm{mod}\ 6)$.\\
Let $t^{'}= \lceil\frac{n}{4}\rceil.$ If $n =4t\,\ \forall$ $t \geq 1$, then define\\
$D_{2, p} =\{v_{4i-3, 6j-1}, v_{4i-3, 6j}, v_{4i-2, 6j-1}, v_{4i-2, 6j}, v_{4i-1, 6j-2}, v_{4i-1,
6j-1}, v_{4i, 6j-2}, v_{4i, 6j-1} : 1 \leq i \leq t',\,\ 1\leq j\leq t\}  \cup\{u_{4l-3}, u_{4l-2} :1 \leq l \leq t'\}.$\\
 If $n=5$, define\\
$D_{2, p}=\{v_{3i-2, 6j-1}, v_{3i-2, 6j}, v_{3i-1, 6j-1}, v_{3i-1,
6j}, v_{3, 6j-2}, v_{3, 6j-1}: 1 \leq j \leq t :1 \leq i \leq
t'\} \cup\{u_{1}, u_{2}, u_{4}, u_{5} \}.$\\
If $n =4t+1\,\ \forall$ $t \geq 2$, define\\
$D_{2, p} =\{v_{4i-3, 6j-1}, v_{4i-3, 6j}, v_{4i-2, 6j-1}, v_{4i-2,
6j}, v_{n-1, 6j-1}, v_{n-1, 6j}, v_{n, 6j-1}, v_{n, 6j} : 1 \leq i
\leq t'-1,\,\ 1\leq j\leq t\} \cup\{v_{4i-1, 6j-2}, v_{4i-1, 6j-1},
v_{4p, 6j-2}, v_{4p, 6j-1} : 1 \leq i \leq t'-1,\,\ 1\leq j\leq
t,\,\ 1\leq p\leq t'-2\}
\cup\{u_{4l-3}, u_{4l-2} :1 \leq l \leq t'-1\} \cup\{u_{n-1}, u_{n}\}.$\\
If $n =4t+2\,\ \forall$ $t \geq 1$, define\\
$D_{2, p} =\{v_{4i-3, 6j-1}, v_{4i-3, 6j}, v_{4i-2, 6j-1}, v_{4i-2,
6j} : 1 \leq i \leq t',\,\ 1\leq j\leq t\} \cup\{v_{4i-1, 6j-2},
v_{4i-1, 6j-1},
v_{4i, 6j-2}, v_{4i, 6j-1} : 1 \leq i \leq t'-1,\,\,\,\ 1\leq j\leq t\} \cup\{u_{4l-3}, u_{4l-2} :1 \leq l \leq t'\}.$\\
If $n=3$, then\\
$D_{2,p}=\{v_{1, 6j-1}, v_{1, 6j}, v_{2, 6j-1}, v_{2, 6j}, v_{3,
6j-2}, v_{3, 6j-1} : 1 \leq i \leq t',\,\ 1\leq j\leq t\}
\cup\{u_{4l-3}, u_{4l-2} :1 \leq l \leq t'\}.$\\
If $n =4t+3\,\ \forall$ $t \geq 1$, then\\
$D_{2, p} =\{v_{4i-3, 6j-1}, v_{4i-3, 6j}, v_{4i-2, 6j-1}, v_{4i-2,
6j} : 1 \leq i \leq t',\,\ 1\leq j\leq t\} \cup\{v_{4i-1, 6j-2},
v_{4i-1, 6j-1},
v_{4p, 6j-2}, v_{4p, 6j-1} : 1 \leq i \leq t',\,\ 1\leq j\leq t,\,\ 1\leq p\leq t'-1\} \cup\{u_{4l-3}, u_{4l-2} :1 \leq l \leq t'\}.$\\
In all these possibilities, it is easy to verify that $D_{2,p}$ is a
$2$-paired dominating set. Further, the cardinality of $D_{2,p}$ in
each case is $2\lceil \frac{2nm-5n}{12}\rceil$, Hence
\begin{equation}
\begin{split}
\gamma_{p}^{2}(f_{n \times m}) \leq 2\lceil \frac{2nm-5n}{12}\rceil.
\end{split}
\end{equation}\\
Now we prove the lower bound of $2$-distance paired dominating set.\\
Let $D_{2,p}=\{x_{i}, y_{i} :1 \leq i\leq q\}$ be a paired
dominating set. According to Lemma \ref{lb of Dp}, $D_{2,p}$ contain
at least $\lceil \frac{m-6}{6}\rceil$ vertices from each $V_{i}$ of
$C_{i, m}$. If $m\equiv4,$ $(\textrm{mod}\ 6)$, then $\lceil
\frac{m-6}{6}\rceil$ pair of  vertices dominate $m-4$ vertices from
each $V_{i}$ of $C_{i, m}$, $where 1 \leq i\leq n.$ The number of
vertices which are yet to be dominated in each $C_{i}$ are $4$, from
which $2$ vertices are of degree $4$ and other vertices of degree
$2$. Suppose that these vertices are $v_{i, 1}, v_{i, 2}, u_{i}$ and
$u_{i+1}$. To dominate these vertices  either $v_{i, 1}, v_{i, 2}$
belongs to $D_{2,p}$ or $u_{i}, u_{i+1}$ belongs to $D_{2,p}$. If
$v_{i, 1}, v_{i, 2}$ belongs to $D_{2,p}$, then the only vertices
which are dominated by these vertices are $u_{i}$ and $u_{i+1}$. If
$u_{i}, u_{i+1}$ belongs to $D_{2,p}$ then these vertices dominate
$13$ vertices of $4$ consecutive copies of $C_{i}$. Thus  we choose
pair of vertices of type $(4, 4)$ from $4$ consecutive copies of
$C_{i}$. Since $\langle D_{2,p}\rangle$ is a paired dominating set
and has perfect matching, so it contains only the non adjacent
edges. This implies that
\begin{equation*}
\begin{split}
|D_{2,p}|& \geq n\lceil \frac{m-6}{6}\rceil + \lceil \frac{n}{4}\rceil\\
& = n(\frac{m-4}{6}) + \lceil\frac{n}{4}\rceil\\
&=\frac{nm-4n}{6}+\lceil\frac{n}{5}\rceil\\
 &= \lceil \frac{2nm-5n}{12}\rceil
\end{split}
\end{equation*}
which implies that $q  \geq \lceil \frac{2nm-5n}{12}\rceil$. Therefore
\begin{equation}
\begin{split}
\gamma_{p}^{2}(f_{n \times m}) \geq 2\lceil \frac{2nm-5n}{12}\rceil.
\end{split}
\end{equation}\\
From Equation $17$ and $18$, it is clear that $$\gamma_{p}^{2}(f_{n \times m})=
2\lceil \frac{2nm-5n}{12}\rceil.$$\\
\textbf{Case 6}: $m\equiv 4 (\textrm{mod}\ 6)$.\\
Let $t^{'}= \lceil\frac{n}{3}\rceil.$\\
If $n=3$, define\\
$D_{2,p}=\{v_{1, 6j-1}, v_{1, 6j}, v_{2, 6j-1}, v_{2, 6j}, v_{3, 6j-2}, v_{3, 6j-1} : 1\leq j\leq t\} \cup\{u_{1}, u_{2}\}.$\\
If $n=4$, define\\
$D_{2, p}=\{v_{i, 6j-1}, v_{i, 6j} :1 \leq i \leq n,\,\ 1\leq j\leq t\} \cup\{u_{1}, u_{2}, u_{3}, u_{4}\}.$\\
If $n=5$, define\\
$D_{2, p}=\{v_{1, 6j-1}, v_{1, 6j}, v_{2, 6j-1}, v_{2, 6j}, v_{4,
6j-1}, v_{4, 6j}, v_{5, 6j-1}, v_{5, 6j}, v_{3, 6j-2}, v_{3, 6j-1} :
1\leq j\leq t\}
 \cup\{u_{1}, u_{2},u_{3}, u_{4}\}.$\\
For $n = 3t\,\ \forall$ $t \geq 2$, define\\
$D_{2, p}=\{v_{3i-2, 6j-1}, v_{3i-2, 6j}, v_{3i-1, 6j-1}, v_{3i-1,
6j}, v_{3i, 6j-2}, v_{3i, 6j-1} : 1 \leq i \leq t',\,\ 1\leq j\leq
t\}
 \cup\{u_{3l-2}, u_{3l-1} : 1 \leq l \leq t'\}.$\\
For $n = 3t+1\,\ \forall$ $t \geq 2$, define\\
$D_{2, p}=\{v_{3i-2, 6j-1}, v_{3i-2, 6j}, v_{3i-1, 6j-1}, v_{3i-1, 6j}, v_{n-1, 6j-1}, v_{n-1, 6j}, v_{n, 6j-1}, v_{n, 6j} : 1 \leq i \leq
t'-1,\,\ 1\leq j\leq t\} \cup\{v_{3i, 6j-2}, v_{3i, 6j-1} : 1 \leq i \leq t'-2,\,\ 1\leq j\leq t\} \cup\{u_{3l-2}, u_{3l-1}, u_{n-1}, u_{n} : 1 \leq l \leq t'-1\}.$\\
For $n = 3t+2\,\ \forall$ $t \geq 2$, define\\
$D_{2, p}=\{v_{3i-2, 6j-1}, v_{3i-2, 6j}, v_{3i-1, 6j-1}, v_{3i-1,
6j} : 1 \leq i \leq t',\,\ 1\leq j\leq t\} \cup\{v_{3i, 6j-2},
v_{3i, 6j-1}
: 1 \leq i \leq t'-1,\,\ 1\leq j\leq t\} \cup\{u_{3l-2}, u_{3l-1} : 1 \leq l \leq t'\}.$\\
In all these possibilities, it is not difficult to see that
$D_{2,p}$ is a $2$-paired dominating set. Further, the cardinality
of $D_{2,p}$ in each case is $2\lceil \frac{nm-3n}{6}\rceil$. Hence
\begin{equation}
\begin{split}
\gamma_{p}^{2}(f_{n \times m}) \leq 2\lceil \frac{nm-3n}{6}\rceil.
\end{split}
\end{equation}\\
Now we give the lower bound of $2$-distance paired dominating set.\\
Let $D_{2,p}=\{x_{i}, y_{i} :1 \leq i\leq q\}$ be a paired
dominating set. According to Lemma \ref{lb of Dp}, $D_{2,p}$ contain
at least $\lceil \frac{m-6}{6}\rceil$ vertices from each $V_{i}$ of
$C_{i, m}$. If $m\equiv4,$ $(\textrm{mod}\ 6)$, then $\lceil
\frac{m-6}{6}\rceil$ pair of  vertices dominate $m-5$ vertices from
each $V_{i}$ of $C_{i, m}$, $where 1 \leq i\leq n.$ The number of
vertices which are yet to be dominated in each $C_{i}$ are $5$, from
which $2$ vertices are of degree $4$ and other vertices of degree
$2$. Suppose that these vertices are $v_{i, 1}, v_{i, 2}, v_{i,
m-1}, u_{i}$ and $u_{i+1}$. To dominate these vertices either $v_{i,
1}, v_{i, m-2}\in D_{2,p}$ or $u_{i}, u_{i+1}\in D_{2,p}$. In both
these cases, each $C_{i, m}$ has at least two vertices from each
$C_{i}$ belong to $D_{2,p}$. Since, each vertex of degree $4$ belong
to neighboring cycle, therefore each vertex of degree $4$ belong to
$D_{2,p}$. Further, $\langle D_{2,p}\rangle$ has perfect matching.
Thus only edges that are not adjacent to each other can belong to
$D_{2,p}$. Thus we choose pair of vertices of type $(4, 4)$ from $3$
consecutive copies of $C_{i}$. Since $\langle D_{2,p}\rangle$ is a
paired dominating set and has perfect matching, so it contains only
the non adjacent edges. This implies that
\begin{equation*}
\begin{split}
q & \geq n\lceil \frac{m-6}{6}\rceil + \lceil \frac{n}{3}\rceil\\
& = n(\frac{m-5}{6}) + \lceil\frac{n}{3}\rceil\\
&=\frac{nm-5n}{6}+\lceil\frac{n}{3}\rceil\\
 &= \lceil \frac{nm-3n}{6}\rceil
\end{split}
\end{equation*}
which implies that $q  \geq \lceil \frac{nm-3n}{6}\rceil$. Therefore\\
\begin{equation}
\begin{split}
\gamma_{p}^{2}(f_{n \times m}) \geq 2\lceil \frac{nm-3n}{6}\rceil.
\end{split}
\end{equation}\\
From Equation $19$ and $20$, it is clear that $$\gamma_{p}^{2}(f_{n \times m})=
2\lceil \frac{nm-5n}{6}\rceil.$$\\

\end{proof}
In Figure $2$, the vertices (dark) of $2$-distance paired dominating set of the graph $f{n\times m}$ are shown.
\begin{figure}[!ht]\label{D1}
\begin{center}
        \centerline
         {\includegraphics[width=12cm]{f2.md}}
        \caption{The 2-distance paired dominating set of $f_{n\times m}$ }
        \end{center}
            \end{figure}

\end{document}